\documentclass[a4paper,11pt]{article}

\usepackage{latexsym,graphics,color}
\usepackage{amsmath,amsfonts,amssymb,amsthm}
\newtheorem{thm}{Theorem}[section]
\newtheorem{lem}[thm]{Lemma}

\newtheorem{cor}[thm]{Corollary}
\newtheorem{prop}[thm]{Proposition}

\newtheorem{remark}[thm]{Remark}

\input epsf

\title{Euclid meets Popeye: The Euclidean Algorithm for $2\times 2$ Matrices}
\author{Roland Bacher}


\begin{document}
\maketitle

\begin{abstract}\footnote{Keywords: Euclidean algorithm,
 lattice, continued fractions.
 Math. class: Primary:
 11A05.
 Secondary: 11H06, 11J70.
} An analogue of the Euclidean algorithm for square matrices of
size $2$ with integral non-negative entries and strictly positive
determinant $n$ defines a finite set $\mathcal R(n)$ of Euclid-reduced
matrices corresponding to elements of
$\{(a,b,c,d)\in\mathbb N^4\ \vert\ n=ab-cd,\ 0\leq c,d<a,b\}$.
With Popeye's help\footnote{Acknowledged by his appearance in the title (he refused co-authorship on the pretext of a weak contribution due to a poor spinach-harvest).}  on the use of sails of lattices
we show that $\mathcal R(n)$ contains
$\sum_{d\vert n,\ d^2\geq n}\left(d+1-\frac{n}{d}\right)$ elements.
\end{abstract}



\section{Introduction}

We denote by $\mathbb N=\{0,1,2,\ldots\}$ the set of all non-negative integers
and by $\mathcal P=\{\left(\begin{array}{cc}a&b\\c&d\end{array}\right)\ \vert a,b,c,d\in \mathbb N,\ ad-bc>0\}$ the set of all square matrices of size $2$
with entries in $\mathbb N$ and strictly positive determinant.
The subset of matrices of determinant $n$ in
$\mathcal P$ is written as $\mathcal P(n)$.

An \emph{elementary reduction} of a matrix $M$ is a matrix
in $\{EM,E^tM,ME,ME^t\}$ where
$E=\left(\begin{array}{cc} 1&-1\\0&1 \end{array}\right)$.
Elementary reductions of $M$ subtract a row/column
from the other row/column of $M$.

A matrix $M$ in $\mathcal P$ is
\emph{Euclid-reduced} if and only if $\mathcal P$ contains
no elementary reduction of $M$. Equivalently,
$M=\left(\begin{array}{cc} a&b\\c&d\end{array}\right)$
in $\mathcal P$
is Euclid-reduced if $\min(a,d)>\max(b,c)$.

We denote by $\mathcal R$ the subset of Euclid-reduced matrices in $\mathcal P$
and by $\mathcal R(n)=\mathcal R\cap\mathcal P(n)$ the subset of $\mathcal R$
corresponding to Euclid-reduced matrices of determinant $n$.

The main result of this paper describes the number
$\sharp(\mathcal R(n))$ of elements
in the set $\mathcal R(n)$ of Euclid-reduced matrices of determinant $n$:

  \begin{thm}\label{thmmain}
  The number of elements $(a,b,c,d)$ in $\mathbb N^4$
  such that $n=ab-cd$ and $\min(a,b)>\max(c,d)$ is given by  
\begin{align}\label{sumEuclirr}
&  \sum_{d\vert n,\ d^2\geq n}\left(d+1-\frac{n}{d}\right)\ .\\
\end{align}
  The map $(a,b,c,d)\longmapsto
  \left(\begin{array}{cc}a&c\\d&b
        \end{array}\right)$ is a one-to-one correspondence
      between such solutions and elements 
      in the set $\mathcal R(n)$ of Euclid-reduced matrices having
      determinant $n$.
  \end{thm}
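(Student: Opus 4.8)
The plan is to separate the two assertions of the theorem, disposing of the bijection first and then reducing the enumeration of $\mathcal R(n)$ to a constrained lattice-point count.

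For the bijection, I would note that $(a,b,c,d)\mapsto M=\left(\begin{array}{cc}a&c\\d&b\end{array}\right)$ is nothing but a relabelling of $\mathbb N^4$, hence automatically injective and surjective onto $2\times 2$ matrices with entries in $\mathbb N$. It then remains only to match the constraints: $\det M=ab-cd=n$, so $M\in\mathcal P(n)$ exactly when $n>0$; and by the reduction criterion recalled above, $M$ is Euclid-reduced precisely when the minimum $\min(a,b)$ of its diagonal exceeds the maximum $\max(c,d)$ of its off-diagonal entries. These are exactly the defining conditions on $(a,b,c,d)$, so the map restricts to a bijection onto $\mathcal R(n)$. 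I expect this step to be routine.

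For the count I would organize the tuples by $s=\min(a,b)$. Given $s$, the off-diagonal entries satisfy $0\le c,d\le s-1$, while the larger diagonal entry $t\ge s$ is forced by $st=ab=n+cd$; this requires $s\mid (n+cd)$ and $cd\ge s^2-n$, and conversely every such $(c,d)$ yields a unique admissible $t$. Writing $N_s(-n)=\sharp\{(c,d)\in\{0,\dots,s-1\}^2:\ cd\equiv -n\!\!\pmod s\}$ for the classical count of the congruence $cd\equiv-n$ modulo $s$, and keeping track of the two orderings of $\{a,b\}$ when $t>s$, one is led to an expression for $\sharp(\mathcal R(n))$ as a sum over $s$ of $2N_s(-n)$, corrected by the pairs violating $cd\ge s^2-n$ and by the diagonal case $t=s$. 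For $s\le\sqrt n$ the size constraint $cd\ge s^2-n$ is vacuous, and here $N_s(-n)=\sum_{g\mid\gcd(s,n)}g\,\phi(s/g)$ is an elementary multiplicative evaluation; note also that nonempty contributions force $2s-1\le n$, so only finitely many $s$ occur.

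The hard part will be the middle range $\sqrt n<s\le (n+1)/2$, where the box constraints $0\le c,d\le s-1$ and the size constraint $cd\ge s^2-n$ genuinely interact and the naive congruence count $N_s(-n)$ overcounts. Since the separate pieces $\sum_s N_s(-n)$ diverge, the target divisor sum $\sum_{d\mid n,\ d^2\ge n}\left(d+1-\frac nd\right)$ can only emerge from a delicate cancellation. This is exactly where I expect to invoke the geometry of sails: reading the columns $(a,d)$ and $(c,b)$ of $M$ as a reduced basis of an index-$n$ sublattice straddling the diagonal, the admissible reduced bases should correspond to vertices and edges of the sail of that lattice, and the continued-fraction combinatorics of the sail should convert the constrained count into the stated divisor sum. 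I would therefore route the final identity through this lattice model rather than through brute arithmetic.
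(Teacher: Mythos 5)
Your first step (the bijection) is fine and matches the paper: the map is a mere relabelling, and the conditions $\det = n$, $\min(a,b)>\max(c,d)$ translate exactly into membership in $\mathcal R(n)$. The problem is everything after that. Your stratification by $s=\min(a,b)$ together with the congruence count $cd\equiv -n\pmod s$ is, as you yourself note, a dead end in the middle range $\sqrt n < s$, and at precisely that point you replace an argument by a hope: ``the continued-fraction combinatorics of the sail should convert the constrained count into the stated divisor sum.'' That sentence is the entire content of the theorem, and it is not proved. Naming the right tool (sails of index-$n$ sublattices) is not the same as supplying the counting mechanism, and the mechanism you gesture at --- massaging your congruence sums via sail combinatorics --- is not how the count actually works.

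The missing idea is a \emph{complementary} count of lattices, not a refined count of solutions. Concretely: (i) a solution corresponds to a \emph{central} sailbasis, i.e.\ a sailbasis whose two elements lie on opposite sides of the diagonal $x=y$, of the sublattice of index $n$ it generates; (ii) the key structural fact is that each lattice has \emph{at most one} central sailbasis, so solutions are in bijection not with bases but with those index-$n$ sublattices of $\mathbb Z^2$ that possess a central sailbasis; (iii) therefore $\sharp(\mathcal R(n)) = \left(\sum_{d\mid n} d\right) - \sharp\{\text{bad lattices}\}$, where a lattice is bad when its sail passes through a diagonal lattice point $(d,d)$; (iv) bad lattices are enumerated by their (unique) normalized sailbasis $u=(d,d)$, $v=(n/d+a,\,a)$, giving $d$ choices of $a$ when $d^2<n$ and $n/d-1$ choices when $d^2\ge n$, whence
\begin{align*}
\sharp(\mathcal R(n)) &= \sum_{d\mid n} d \;-\; \sum_{d\mid n,\ d^2<n} d \;-\; \sum_{d\mid n,\ d^2\ge n}\left(\frac{n}{d}-1\right) \;=\; \sum_{d\mid n,\ d^2\ge n}\left(d+1-\frac{n}{d}\right).
\end{align*}
Both counts in (iii) and (iv) are elementary Hermite-normal-form style arguments (fix the intersection of the lattice with a line, then count translates); no continued-fraction cancellation is involved. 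Without steps (ii)--(iv) your proposal does not establish the formula.
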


  All summands occuring in (\ref{sumEuclirr}) are strictly positive and
  the last summand (corresponding to the trivial divisor $d=n$ of $n$)
  equals $n$.
  We have therefore $\sharp(\mathcal R(n))\geq n$ with  equality
  for $n>1$ if and only if $n$ is a prime number.
  Our proof of Theorem \ref{thmmain} shows that
  solutions associated to a prime number $p$
  are in one-to-one correspondence with the
  $p$ sublattices of index $p$ in $\mathbb Z^2$
  which do not contain the vector $(1,1)$.

  Similarly, $\sharp(\mathcal R(n))=n+1$
  if and only if $n=p^2$ is the square of prime number $p$. 

  Cardinalities of the sets $\mathcal R(1),\mathcal R(2),\ldots$
  are given by the integer sequence
  $$1,2,3,5,5,8,7,11,10,14,11,19,13,20,18,24,17,30,19,31,\ldots$$
  not yet recognized by The Online-Encyclopedia of Integer Sequences
  \cite{OEIS}.
  
  Klein's Vierergruppe $\mathbb V$ (underlying the $2$-dimensional vector
  space over the field of two elements) acts on solutions $(a,b,c,d)$
  by permuting the first two entries, the last two entries or
  the first two and the last two entries. We denote by
  $\mathcal O=\{(a,b,c,d),(b,a,c,d),(a,b,d,c),(b,a,d,c)\}$
  the orbit of a solution $(a,b,c,d)$ under the action of $\mathbb V$.
  The following lists give lexicographically largest representants of all
  orbits for the sets of solutions associated to the prime
  numbers $11,13$ and $17$:
$$\begin{array}{cccc|c}
    a&b&c&d&\sharp(\mathcal O)\\
    \hline
    11&1&0&0&2\\
    6&2&1&1&2\\
    4&3&1&1&2\\
    5&3&2&2&2\\
    5&4&3&3&2\\
    6&6&5&5&1\\
    \hline
     &&&&11
  \end{array}\qquad
\begin{array}{cccc|c}
    a&b&c&d&\sharp(\mathcal O)\\
    \hline
    13&1&0&0&2\\
    7&2&1&1&2\\
    5&3&2&1&4\\
  4&4&3&1&2\\
  5&5&4&3&2\\
    7&7&6&6&1\\
    \hline
     &&&&13
  \end{array}\qquad
\begin{array}{cccc|c}
    a&b&c&d&\sharp(\mathcal O)\\
    \hline
    17&1&0&0&2\\
    9&2&1&1&2\\
    6&3&1&1&2\\
  5&4&3&1&4\\
  7&3&2&2&2\\
  5&5&4&2&2\\
  7&6&5&5&2\\
    9&9&8&8&1\\
    \hline
     &&&&17
  \end{array}
  $$

  For $n=12,14,15$ we get
  \begin{align*}
    \sharp(\mathcal S_{12})&=(4+1-3)+(6+1-2)+(12+1-1)=19,\\
    \sharp(\mathcal S_{14})&=(7+1-2)+(14+1-1)=20,\\
    \sharp(\mathcal S_{15})&=(5+1-3)+(15+1-1)=18.
  \end{align*}
  The associated lexicographically largest solutions in orbits are given by
$$\begin{array}{cccc|c}
    a&b&c&d&\sharp(\mathcal O)\\
    \hline
    12&1&0&0&2\\
    6&2&0&0&2\\
    6&2&1&0&4\\
    4&3&0&0&2\\
    4&3&1&0&4\\
    4&3&2&0&4\\
    4&4&2&2&1\\
    \hline
     &&&&19
  \end{array}\qquad
\begin{array}{cccc|c}
    a&b&c&d&\sharp(\mathcal O)\\
    \hline
    14&1&0&0&2\\
  7&2&0&0&2\\
  7&2&1&0&4\\
    5&3&1&1&2\\
  4&4&2&1&2\\
  6&3&2&2&2\\
  5&4&3&2&4\\
  6&5&4&4&2\\
    \hline
     &&&&20
  \end{array}\qquad
\begin{array}{cccc|c}
    a&b&c&d&\sharp(\mathcal O)\\
    \hline
    15&1&0&0&2\\
  5&3&0&0&2\\
  5&3&1&0&4\\
    5&3&2&0&4\\
  8&2&1&1&2\\
  4&4&1&1&1\\
  6&4&3&3&2\\
  8&8&7&7&1\\
    \hline
     &&&&18
  \end{array}
  $$

  It is perhaps worthwhile to note that non-negative
  integral solutions of $n=ab+cd$ with 
  $\min(a,b)>\max(c,d)$ are also interesting:
  For $n=p$ an odd prime there are $(p+1)/2$ solutions.
  If $p$ is congruent to $1$ modulo $4$, the number $(p+1)/2$ of
  such solutions is odd and the action of Klein's Vierergruppe
  has a fixed point expressing $p$ as a sum of two squares, see \cite{Baquix}.
  
  The sequel of this paper is organized as follows:

  Section \ref{sectcoprime} uses Moebius inversion in order
  to obtain the number of elements
  with coprime entries in $\mathcal R(n)$.

  Section \ref{sectlatt} recalls a well-known formula
  for the number of sublattices of index $n$
  in $\mathbb Z^2$. We give an elementary proof.

  Unless stated otherwise, a lattice is always a discrete subgroup
  isomorphic to $\mathbb Z^2$ of the Cartesian coordinate plane
  $\mathbb R^2$ considered as a vector space.

  Section \ref{sectsail} describes the sail of a lattice $\Lambda$
  contained in the Cartesian coordinate plane $\mathbb R^2$.

  Section \ref{sectproof} is devoted to the proof of Theorem \ref{thmmain}.

  Section \ref{sectcompl} contains a few complements: An elementary proof
  for finiteness of the set $\mathcal R(n)$, a short discussion on
  matrices of larger size or of determinant $0$. It ends with the description
  of a perhaps interesting variation over the ring of Gau\ss ian integers.

  
\section{Coprime solutions}\label{sectcoprime}

Let $\mathcal R'(n)$ denote
the subset of $\mathcal R(n)$ containing all Euclid-reduced matrices
with coprime entries. Dividing all entries of matrices in $\mathcal R(n)$ by
their greatest common divisor, we get a bijection
between $\mathcal R(n)$ and $\cup_{d,d^2\vert n}\mathcal R'(n/d^2)$
showing the
identity $\sharp(\mathcal R(n))=\sum_{d,\ d^2\vert n}
\sharp(\mathcal R'(n/d^2))$.
Moebius inversion of this identity yields now the formula 
\begin{align}\label{moebiusinv}
  \sharp(\mathcal R'(n))&=\sum_{d^2\vert n}\mu(d)\sharp(\mathcal R(n/d^2))
\end{align}
(where the Moebius function $\mu$ is defined by $\mu(n)=(-1)^e$ if
$n$ is a product of $e$ distinct primes and $\mu(n)=0$ if $n$ has a non-trivial
square-divisor).

Observe that $\mathcal R'(n)=\mathcal R(n)$ if and only if $\mu(n)\not=0$.

Cardinalities of $\mathcal R'(1),\mathcal R'(2),\ldots$ yield the
integer sequence
$$1,2,3,4,5,8,7,9,9,14,11,16,13,20,18,19,17,28,19,26,\ldots$$
not yet contained in \cite{OEIS}.

\begin{remark}
Formula (\ref{moebiusinv}) is the analogue of the identity
$$\phi(n)=\sum_{d\vert n}\mu(d)n/d$$
(where $\phi(n)=\sum_{d\vert n}\mu(d)n/d$ is the M\"obius inversion of
the trivial identity $n=\sum_{d\vert n}\phi(n)$)
for Euler's totient function
$\phi(n)=n\prod_{p\vert n}\left(1-\frac{1}{p}\right)$
counting the number of invertible classes
in $\mathbb Z/n\mathbb Z$.
\end{remark}

\section{Sublattices of finite index in $\mathbb Z^2$}\label{sectlatt}

The following well-known result (see Remark \ref{remlattgen}
below) is a crucial ingredient
for proving Theorem \ref{thmmain}. We give an elementary proof for the comfort
of the reader.

\begin{thm}\label{thmsublatZ2} The lattice $\mathbb Z^2$ has $\sum_{d,d\vert n} d$ different sublattices of index $n$.
\end{thm}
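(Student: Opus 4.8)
The plan is to count sublattices of index $n$ in $\mathbb Z^2$ by choosing a canonical matrix representative for each sublattice. Every sublattice $\Lambda$ of index $n$ is the column span (or row span) of some integer matrix $M$ with $\det M = \pm n$, but many matrices give the same lattice: two matrices generate the same lattice if and only if they differ by right-multiplication by an element of $\mathrm{GL}_2(\mathbb Z)$. So the first step is to recall Hermite Normal Form: every such lattice has a \emph{unique} generating matrix of the upper-triangular shape
\[
\left(\begin{array}{cc} a & b\\ 0 & c\end{array}\right),\qquad a,c\geq 1,\ ac=n,\ 0\leq b<a.
\]
I would prove this normal form directly and elementarily, using column operations (the analogue of the Euclidean algorithm that the paper is built around): given any basis of $\Lambda$, clear the lower-left entry to zero by a gcd computation on the first column, fix the sign so the diagonal entries are positive, and then reduce the off-diagonal entry $b$ modulo $a$ by subtracting multiples of the first column. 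Uniqueness follows because any two upper-triangular matrices with positive diagonal differing by a $\mathrm{GL}_2(\mathbb Z)$ matrix on the right must actually differ by an upper-triangular unimodular matrix, which forces equal diagonals and equal reduced $b$.

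With the normal form established, the count is immediate: the number of sublattices of index $n$ equals the number of triples $(a,b,c)$ with $a,c\geq 1$, $ac=n$, and $0\leq b<a$. For each factorization $n=ac$ there are exactly $a$ choices of $b$, so the total is
\[
\sum_{\substack{a,c\geq 1\\ ac=n}} a \;=\; \sum_{a\vert n} a \;=\; \sum_{d\vert n} d,
\]
which is the claimed formula. The only genuine content is the normal form; once that is in hand the arithmetic is a one-line divisor sum.

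The step I expect to be the main obstacle is proving \emph{uniqueness} of the Hermite Normal Form cleanly, rather than just existence. Existence is a routine Euclidean reduction, but one must argue carefully that the normalization $0\leq b<a$ with positive diagonal pins down a single representative per lattice. The key observation is that the stabilizer of the upper-triangular shape under right $\mathrm{GL}_2(\mathbb Z)$-action consists only of matrices $\left(\begin{array}{cc} \pm 1 & *\\ 0 & \pm 1\end{array}\right)$; combined with the positivity of $a,c$ and the range constraint on $b$, this leaves no nontrivial freedom. I would phrase this as: if $M$ and $M'$ are both in normal form and $M' = M U$ with $U\in\mathrm{GL}_2(\mathbb Z)$, then comparing first columns shows $U$ is upper triangular with diagonal entries $\pm 1$, forcing $U=\mathrm{Id}$ after matching the diagonal signs and the reduced off-diagonal entry. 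This reduction argument is exactly in the spirit of the paper's matrix Euclidean algorithm, so it fits the surrounding narrative.
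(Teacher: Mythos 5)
Your proof is correct and is essentially the same as the paper's: your Hermite Normal Form $\left(\begin{array}{cc} a & b\\ 0 & c\end{array}\right)$ with $ac=n$ and $0\leq b<a$ is precisely the paper's parametrization $\Lambda=\mathbb Z(d,0)+\mathbb Z(a,n/d)$ with $0\leq a<d$, except that the paper obtains existence and uniqueness directly by characterizing $d$ intrinsically (as the order of $(1,0)$ in $\mathbb Z^2/\Lambda$, so that $\Lambda\cap\mathbb Z(1,0)=\mathbb Z(d,0)$ and the second generator is well-defined modulo $\mathbb Z(d,0)$), rather than through a $\mathrm{GL}_2(\mathbb Z)$ stabilizer computation. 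The final count --- $d$ choices of off-diagonal entry for each divisor $d$, summed over all divisors --- is identical in both arguments.
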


\begin{proof} Let $\Lambda$ be a sublattice of index $n$ in $\mathbb Z^2$.
  The order $d$ of $(1,0)$ in the finite quotient group $\mathbb Z^2/\Lambda$
  is therefore a divisor of $n$ and we have $\Lambda\cap \mathbb Z(1,0)=\mathbb Z(d,0)$. There exists therefore a unique element $a$ $\{0,\ldots,d-1\}$
  such that $\Lambda=\mathbb Z(d,0)+\mathbb Z(a,n/d)$. This shows that
  the lattice
  $\mathbb Z^2$ has $d$ different sublattices of index $n$
  intersecting $\mathbb Z(1,0)$ in $\mathbb Z(d,0)$ for every divisor $d$ of
  $n$. Summing over all divisors yields the result.
\end{proof}

\begin{remark}\label{remlattgen} More generally,
the number of 
enumerates sublattices of index $n$ in $\mathbb Z^d$ is given by 
\begin{align}\label{formexactsigma}
&\prod_{p\vert n}\left(\begin{array}{cc}e_p+d-1\\d-1
\end{array}\right)_p
\end{align}
(see e.g. \cite{Gr} or \cite{Zou})
where 
$\prod_{p\vert n}p^{e_p}=n$ is the factorization of $n$ into prime-powers
and where
$$\left(\begin{array}{cc}e_p+d-1\\d-1
\end{array}\right)_p=\prod_{j=1}^{d-1}\frac{p^{e_p+j}-1}{p^j-1}$$
is the evaluation of the $q$-binomial
$$\left[\begin{array}{cc}e_p+d-1\\d-1
\end{array}\right]_q=\frac{[e_p+d-1]_q!}{[e_p]_q!\ [d-1]_q!}$$
(with $[k]_q!=\prod_{j=1}^k\frac{q^j-1}{q-1}$) at the prime-divisor $p$
of $n$.

Formula (\ref{formexactsigma}) boils of course down to $\sum_{d,d\vert n}d$
if $d=2$.
\end{remark}

\section{The sail of a lattice}\label{sectsail}

Sails of lattices in $\mathbb R^d$, introduced and studied by V. Arnold,
cf. e.g\cite{Ar},  are a possible generalization of continued
fraction expansions to higher dimension. We define and discuss here
only the case $d=2$ corresponding to ordinary continued fractions.


We denote by 
$Q_{\mathrm{I}}=\{(x,y)\ \vert 0\leq x,y\}$ the closed first quadrant  containing all points with non-negative coordinates of
the Cartesian coordinate plane $\mathbb R^2$.

The \emph{sail}
$\mathcal S=\mathcal S(\Lambda)$ of a lattice $\Lambda\subset \mathbb R^2$
is the boundary with respect to the closed first quadrant $Q_{\mathrm{I}}$
of the convex hull of all non-zero elements
$(\Lambda\setminus (0,0))\cap Q_{\mathrm{I}}$ of $\Lambda$
contained in $Q_{\mathrm I}$.

The sail $\mathcal S$ of a lattice $\Lambda$
is a piecewise linear path with 
vertices in $\Lambda$ which intersects every
$1$-dimensional subspace of finite strictly positive slope in a
unique point.
Affine pieces of sails have finite strictly negative slopes.
Any affine line intersecting a sail in two
points has therefore finite strictly negative slope.

Each coordinate axis intersects a sail either in a unique
point
(this happens if and only if the coordinate axis contains
infinitely many points of the underlying lattice $\Lambda$)
or is an asymptote of the sail (if $\Lambda$ contains no
non-zero elements of the
coordinate axis).

The sail $\mathcal S(\Lambda)$ of a sublattice $\Lambda$ of index $n$ in
$\mathbb Z^2$ is always bounded with endpoints
$(\alpha_x,0),(0,\omega_y)$ for two divisors $\alpha_x$ and $\omega_y$
of $n$ such that $\alpha_x\omega_y\geq n$.

Two distinct lattice elements $u,v\in \Lambda$ on the sail $S=
\mathcal S(\Lambda)$ of a lattice $\Lambda$
are \emph{consecutive} if
the open segment joining $u$ and $v$
is contained in $\mathcal S\setminus \Lambda$.

\begin{lem}\label{lemsailbasis} Two distinct lattice elements $u,v$ on the
  sail $\mathcal S(\Lambda)\cap \Lambda$ of a lattice $\Lambda$
  generate $\Lambda$ if and only if they are consecutive.
\end{lem}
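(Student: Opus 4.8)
The plan is to prove both directions of the equivalence, working in the lattice $\Lambda$ with its sail $\mathcal S(\Lambda)$. The key geometric fact I would exploit is that an affine line meeting the sail in two points has strictly negative slope (stated in the excerpt), together with the characterization of consecutive points as those joined by an open segment of the sail containing no lattice point.

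First I would handle the easier direction: if $u,v$ generate $\Lambda$, then they are consecutive. Suppose $u,v$ lie on the sail but are not consecutive, so the open segment between them meets $\Lambda$ in some point $w$. Since $w$ lies strictly between $u$ and $v$, we can write $w=tu+(1-t)v$ with $0<t<1$, and $w\in\Lambda$. If $u,v$ generated $\Lambda$ then $w=\alpha u+\beta v$ with $\alpha,\beta\in\mathbb Z$; comparing with the segment expression (and using that $u,v$ are linearly independent, which holds because two distinct sail points cannot be proportional — a ray of positive slope meets the sail once) forces $\alpha=t$ and $\beta=1-t$, contradicting integrality since $0<t<1$. Hence no such $w$ exists and $u,v$ are consecutive.

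For the converse — consecutive implies generating — I would let $\Lambda'=\mathbb Z u+\mathbb Z v$ be the sublattice they generate and argue $\Lambda'=\Lambda$. Since $u,v$ are linearly independent, $\Lambda'$ has finite index $k=[\Lambda:\Lambda']$ in $\Lambda$, and I must show $k=1$. The fundamental parallelogram $P$ spanned by $u,v$ has area $k$ times the covolume of $\Lambda$. The idea is that if $k>1$, the open triangle or parallelogram region near the sail edge $uv$ must contain a lattice point of $\Lambda$, and such a point would lie on or below the segment $uv$ in a way that contradicts either consecutiveness or the convexity/minimality of the sail. Concretely, I would consider the half-open parallelogram spanned by $u$ and $v$; any coset representative of $\Lambda/\Lambda'$ other than $0$ gives a lattice point $w$ inside it, and I would show $w$ (or a translate $w-u$, $w-v$, $w-u-v$) lands strictly inside the triangle $0uv$ on the quadrant side, forcing a lattice point strictly between the sail and the origin past the edge $uv$, contradicting that $uv$ is an edge of the convex hull boundary.

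The main obstacle I anticipate is making the convexity argument in the converse fully rigorous: I need to rule out that the extra lattice point $w$ sits outside the quadrant or on the far side of the segment where it would not contradict the sail's defining convex-hull property. The clean way is a Pick-type or unimodularity argument: an edge of the boundary of the convex hull of lattice points, with no lattice point in the open segment, is necessarily a \emph{primitive} edge, meaning $u,v$ span a sublattice whose fundamental domain contains no interior or boundary lattice points other than the vertices — and this is precisely the statement that $u,v$ form a basis. I would phrase this via the observation that consecutiveness on the sail means the triangle with vertices $0,u,v$ (or the relevant empty region adjacent to the edge) is an \emph{empty} lattice triangle, and empty lattice triangles have minimal area equal to the covolume, which is equivalent to $u,v$ generating $\Lambda$. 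Establishing that emptiness cleanly from the convex-hull definition of the sail, rather than assuming Pick's theorem, is the delicate step.
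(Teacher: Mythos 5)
Your skeleton is the same as the paper's: show that for consecutive $u,v$ the closed triangle $\Delta$ with vertices $(0,0),u,v$ contains no lattice point other than its three vertices, then conclude that $u,v$ is a basis because $\Delta\cup(-\Delta)$ is a fundamental domain of $\mathbb Z u+\mathbb Z v$. But the step you explicitly leave open (``establishing that emptiness cleanly from the convex-hull definition of the sail \ldots is the delicate step'') is the entire content of the lemma, and neither Pick's theorem nor ``empty lattice triangles have minimal area'' fills it: those facts convert emptiness into the basis property (the standard part), they do not produce the emptiness. What produces it is the observation the paper's proof opens with: since $\mathcal S$ is by definition the boundary of the convex hull of $(\Lambda\setminus\{(0,0)\})\cap Q_{\mathrm I}$, no non-zero lattice point of $\Lambda$ lies in the bounded (origin-side) component of $Q_{\mathrm I}\setminus\mathcal S$. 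Because the sail meets every ray of finite positive slope exactly once, any point $p=\alpha u+\beta v$ of $\Delta$ with $\alpha,\beta\geq 0$ and $0<\alpha+\beta<1$ lies strictly on the origin side of $\mathcal S$: the ray through $p$ meets $\mathcal S$ at $\frac{1}{\alpha+\beta}\,p$, a point of the segment $[u,v]$ lying strictly beyond $p$ (here one uses that the open segment $(u,v)$ is contained in $\mathcal S$, which is part of the definition of consecutive). Hence the only candidates for lattice points of $\Delta$ besides $(0,0),u,v$ lie on the open segment $(u,v)$, and consecutiveness excludes exactly those. With this one observation your outline closes; without it, it is not a proof.

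Second, your ``easy'' direction misreads the definition of consecutive. Non-consecutiveness does not mean that a lattice point lies on the open segment joining $u$ and $v$: it can also mean that this open segment is not contained in $\mathcal S$ at all. Example: in the lattice generated by $(1,2)$ and $(3,1)$, the points $u=(0,5)$ and $v=(3,1)$ both lie on the sail and the open segment joining them carries no lattice point, yet they are not consecutive, because the sail has the vertex $(1,2)$ strictly between them on the origin side of the line through $u$ and $v$ (and indeed $u,v$ generate only a sublattice of index $3$). Your integrality trick survives the repair --- such a sail vertex is a lattice point $\alpha u+\beta v$ with $\alpha,\beta\geq 0$ real and $\alpha+\beta<1$, impossible when $u,v$ is a basis --- but as written the case is simply absent. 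Once you have the separation observation above, both directions follow at once from the single equivalence ``$u,v$ consecutive $\Leftrightarrow$ $\Delta$ contains no lattice point besides its vertices,'' which is how the paper organizes the argument.
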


\begin{proof} Since all non-zero lattice points in $\mathcal Q_{\mathrm{I}}$
  belong to the unbounded
  convex region of $\mathcal Q_{\mathrm{I}}\setminus \mathcal S$,
  the closed triangle $\Delta=\Delta(u,v)$ with vertices $(0,0),u,v$ 
  contains no other element of $\Lambda$ if and only if $u$ and $v$
  are consecutive.

  Pairs of consecutive points $u,v$ generate $\Lambda$
  since $\Delta\cup(-\Delta)$ is a fundamental domain for
  the lattice spanned by $u$ and $v$.
\end{proof}

A \emph{sailbasis} of a lattice $\Lambda$ is a basis of $
\Lambda$ consisting of two consecutive elements in 
the sail $\mathcal S$ of $\Lambda$. Every lattice has a sailbasis.

Two linearly independent elements $u,v$ in the the first quadrant
$Q_{\mathrm I}$ form a sailbasis of the
lattice $\mathbb Z u+\mathbb Z v$ generated by $u$ and $v$ 
if and only if the affine line containing $u$ and $v$
has finite strictly negative slope.

\begin{remark} Sails are generalisations of
  continued fractions: Given a real number
  $\theta$, vertices of the sail for the lattice $e^{-i\arctan(\theta)}
(\mathbb Z+i\mathbb Z)\cap [0,\infty]+i[0,\infty]\setminus\{0\}$
correspond essentially to convergents of $\theta$, see for example \cite{Ar}.
\end{remark}

\section{Proof of Theorem \ref{thmmain}}\label{sectproof}

A sailbasis $u,v$ of a lattice is \emph{central} if the open segment
joining $u$ and $v$ intersects the
diagonal line $x=y$. The two elements of a central sailbasis belong therefore
to different connected components of $\mathbb R^2\setminus \mathbb R(1,1)$.
Every lattice has at most a unique central sailbasis.

A lattice $\Lambda$ is \emph{bad} if it has no central sailbasis.
Equivalently, a lattice is bad if its sail $\mathcal S$ intersects the set
$\Lambda\cap\mathbb R(1,1)$ of diagonal lattice-elements.

A sailbasis $u,v$ of a bad lattice $\Lambda=\mathbb Z u+\mathbb Z v$
is \emph{normalized} if $u$ in $\mathbb R (1,1)$ is a diagonal element
and $v$ belongs to
the open halfplan $\{(x,y)\ \vert\ x>y\}$ below the diagonal line. 
Lemma \ref{lemsailbasis}
shows that a bad lattice $\Lambda$ has a unique normalized
sailbasis given by $u=\mathcal S\cap\mathbb R(1,1)$ and by the unique
consecutive element $v$ in $\mathcal S\cap \Lambda$ of $u$ which lies below
the diagonal line $x=y$.

\begin{prop}\label{propbadlatt} The lattice $\mathbb Z^2$ contains
  $$\sum_{d,\ d^2<n,\ d\vert n}d+\sum_{d,\ d^2\geq n,\ d\vert n}(n/d-1)$$
  bad sublattices of index $n$.
\end{prop}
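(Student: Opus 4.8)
The plan is to count bad sublattices by organizing them according to the diagonal lattice-element lying on their sail. Recall from Theorem \ref{thmsublatZ2} that a sublattice $\Lambda$ of index $n$ is determined by the order $d$ of $(1,0)$ in $\mathbb Z^2/\Lambda$ together with a choice of offset; here I will instead parametrize by the diagonal point. A bad lattice $\Lambda$ of index $n$ has a normalized sailbasis $u,v$ with $u=(k,k)\in\mathbb R(1,1)$ and $v$ below the diagonal, and by Lemma \ref{lemsailbasis} the pair $u,v$ generates $\Lambda$, so $n=\det\left(\begin{smallmatrix} k & v_1\\ k & v_2\end{smallmatrix}\right)=k(v_2-v_1)$ up to sign. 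Since $u$ lies on the sail, it must be the \emph{first} diagonal lattice point, i.e.\ $(k,k)$ with $k$ minimal such that $(k,k)\in\Lambda$; equivalently $k$ is the smallest positive multiple of $(1,1)$ in $\Lambda$, and this $k$ is a divisor of $n$.

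First I would show that the diagonal vector $u=(k,k)$ on the sail of a bad lattice of index $n$ satisfies $k\mid n$, and that for each such $k$ the number of bad lattices whose sail meets the diagonal at $(k,k)$ can be counted. The key geometric input is that $u,v$ is a sailbasis precisely when the line through $u$ and $v$ has finite strictly negative slope (the criterion quoted just before the present Proposition), together with the consecutivity/normalization conditions: $v$ lies strictly below the diagonal, and there is no lattice point of $\Lambda$ in the open triangle $\Delta((0,0),u,v)$ nor any diagonal point closer to the origin than $u$. I would translate these into explicit inequalities on the coordinates of $v$ relative to $k$ and $m:=n/k$, and then count integer solutions.

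The heart of the computation is that, after fixing $k\mid n$, the freedom in choosing $v=(v_1,v_2)$ is a residue class of size governed by $k$ and $m=n/k$, but cut down by the requirement that $v$ be strictly below the diagonal and that $u$ be the sail vertex rather than merely a diagonal point. This is where the two regimes $d^2<n$ and $d^2\ge n$ split: writing the divisor as $d=k$, when $k$ is small relative to $m=n/k$ (so $k^2<n$) the point $(k,k)$ can always be realized as a genuine sail vertex and all $d=k$ residue choices survive, contributing the summand $d$; when $k$ is large (so $k^2\ge n$) the constraint that $(k,k)$ actually lie on the sail below-diagonal eliminates some choices, and a short counting argument should yield exactly $n/k-1=m-1$ surviving lattices, matching the second sum. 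I would make the divisor substitution $d\leftrightarrow n/d$ symmetric in the bookkeeping so that each bad lattice is counted once.

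The main obstacle I anticipate is the boundary analysis in the regime $d^2\ge n$: showing precisely that exactly $m-1$ (rather than $m$ or $m+1$) of the candidate offsets $v$ give a valid normalized sailbasis with $(k,k)$ on the sail. This requires arguing that $(k,k)$ is genuinely a vertex of the convex hull (not hidden behind some other lattice point with smaller diagonal-distance) and handling the edge cases where $v$ approaches the coordinate axis or the diagonal. Concretely I expect to need the fact, established in Section \ref{sectsail}, that the sail meets every line of positive slope once and has negative-slope affine pieces, so that "$(k,k)$ lies on the sail" is equivalent to a clean extremality condition on $v$; converting that condition into the count $m-1$ is the delicate step. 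Summing $\sum_{d\mid n,\,d^2<n}d+\sum_{d\mid n,\,d^2\ge n}(n/d-1)$ then gives the stated total.
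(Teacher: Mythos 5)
Your approach is the same as the paper's: parametrize bad lattices by their (unique) normalized sailbases, fix the diagonal sail vertex $(k,k)$ with $k\mid n$, use the determinant to place $v$ on the line $v_x-v_y=n/k$, and count admissible $v$ separately according to whether $k^2<n$ or $k^2\geq n$. The skeleton, the case split, and the predicted summands $d$ and $n/d-1$ are all right.

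However, the proposal stops exactly where the proof has to happen: both counts are asserted (``all $d=k$ residue choices survive'', ``a short counting argument should yield exactly $n/k-1$'') rather than derived, and you flag the second as a delicate boundary analysis involving convex-hull extremality and ``hidden'' vertices. That difficulty is illusory, and the criterion you yourself quote dissolves it: two linearly independent points of the first quadrant form a sailbasis of the lattice they generate if and only if the line through them has finite strictly negative slope, which for $u=(k,k)$ and $v=(n/k+a,a)$ with $a=v_y\geq 0$ is exactly the chain of inequalities $0\leq v_y<k<v_x=n/k+a$. No further convex-hull argument is needed: badness and normalization are automatic (the sail contains the diagonal point $u$, and $v$ lies below the diagonal since $v_x-v_y=n/k>0$), and uniqueness of the normalized sailbasis makes the assignment $(k,a)\mapsto\Lambda$ injective, while surjectivity holds because the diagonal vertex of any bad lattice of index $n$ is $(k,k)$ with $k$ the order of $(1,1)$ in $\mathbb Z^2/\Lambda$, a divisor of $n$. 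The counting is then immediate interval arithmetic: if $k^2<n$ the condition $k<n/k+a$ holds for every $a\geq 0$, so $a$ ranges over $\{0,\dots,k-1\}$, giving $k$ lattices; if $k^2\geq n$ it forces $a\geq k-n/k+1$, so $a$ ranges over $\{k-n/k+1,\dots,k-1\}$, giving $n/k-1$ lattices. These two lines are the entire quantitative content of the Proposition, and they are what is missing from your write-up.
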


\begin{proof} Bad lattices are in one-to-one correspondence
  with their normalized sailbases. We count them by adapting
  the proof of Theorem \ref{thmsublatZ2}.
  
  Let $u=(d,d)$ in $\Lambda\cap\mathcal S$ be the diagonal element 
  of a normalized sailbasis $u,v$ generating a
  bad sublattice $\Lambda=\mathbb Zu+\mathbb Z v$
  of index $n$ in $\mathbb Z^2$.
  The image of the element $(1,1)$ in the quotient group $\mathbb Z^2/\Lambda$
  is therefore of order $d$ dividing $n$. Since $u,v$ is
  a sailbasis, the coefficients $v_x,v_y$ of the 
  remaining basis element
  $v=(v_x,v_y)$ satisfy the inequalities $0\leq v_y<d<v_x$.
  Since $\Lambda=\mathbb Z u+\mathbb Z v$ is a sublattice of
  index $n$ in $\mathbb Z^2$, the element $v$ of $\mathbb N^2$
  belongs to the line $(n/d,0)+\mathbb R(1,1)$.
  We have therefore  $v=(n/d+a,a)$ for a suitable non-negative integer $a$.

  If $d<\sqrt{n}$, the trivial inequalities $d<n/d\leq n/d+a=v_x$
  imply $v_x>d$ for all choices of $a$ in $\mathbb N$.
  The inequality $v_y<d$ implies that $a=v_y$ belongs to
  the set $\{0,1,2,\ldots,d-1\}$ of the $d$ smallest non-negative
  integers. For every divisor $d<\sqrt{n}$ there are
  therefore $d$ bad sublattices of index $n$
  containing $(d,d)$ in their sail.

  If $d$ is a divisor of $n$ such that $d\geq \sqrt{n}$ the
  inequality $d<v_x=n/d+a$ implies $a\geq d-n/d+1\geq 0$.
  We have also $a=v_y<d$ leading to $a$ in the set
  $\{d-n/d+1,d-n/d+2,\ldots,d-1\}$ containing $n/d-1$ elements.

  Summing over all contributions given by divisors of $n$
  ends the proof.
\end{proof}

\begin{proof}[Proof of Theorem \ref{thmmain}]
  Solutions of $ab-cd=n$ with $\min(a,b)>\max(c,d)$ are in one-to-one
  correspondence with central sailbases $(a,d),(d,b)$ generating sublattices
  of index $n$ in $\mathbb Z^2$.
  The number of elements in $\mathcal R(n)$ is therefore obtained by
  subtracting the number $\sum_{d,\ d^2<n,\ d\vert n}d+\sum_{d,\ d^2\geq n,\ d\vert n}(n/d-1)$ of bad lattices
of index $n$ in $\mathbb Z^2$ given by Proposition \ref{propbadlatt}
from the total number $\sum_{d,d\vert n}d$
of lattices of index $n$ in $\mathbb Z^2$
given by Theorem \ref{thmsublatZ2}. Simplification yields the result.
\end{proof}

\section{Complements}\label{sectcompl}

  \subsection{Finiteness}

  We discuss in this Section a few finiteness properties of Euclid-reducedness.

  First, we give an elementary proof of finiteness for the number
  of Euclid-reduced matrices in $\mathcal P$ of given determinant
  which does not make use of Theorem \ref{thmmain}.

  We consider then briefly the case of matrices of size larger than
  $2$ and of square matrices of size two with determinant $0$.

\subsection{An easy bound on entries of Euclid-reduced matrices}  

\begin{prop}\label{propeasybound} Matrices in $\mathcal R(n)$
  involve only entries in $\{0,1,\ldots,n\}$.
\end{prop}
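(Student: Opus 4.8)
The plan is to bound each entry of a Euclid-reduced matrix $M = \left(\begin{array}{cc} a&b\\c&d\end{array}\right)$ in $\mathcal{R}(n)$ directly from the defining inequalities $\min(a,d) > \max(b,c)$ together with the determinant constraint $ad - bc = n$. The key observation is that Euclid-reducedness forces the off-diagonal entries $b,c$ to be strictly smaller than both diagonal entries $a,d$, so the diagonal entries carry the "size" of the matrix, and I want to show they cannot exceed $n$.

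**First I would** handle the diagonal entries $a$ and $d$. By symmetry it suffices to bound $a$ (the transpose argument, or the symmetry of the inequality $\min(a,d) > \max(b,c)$, handles $d$). From $n = ad - bc$ I would write $a = \frac{n + bc}{d}$. Since the matrix is Euclid-reduced we have $b < d$ and $c < d$, hence $bc \leq (d-1)^2$; I would substitute this to get an upper bound on $a$ in terms of $d$ and $n$. The delicate point is that a naive bound $bc < d^2$ gives $a < \frac{n}{d} + d$, which is not obviously at most $n$. So I expect the clean inequalities $b \leq d-1$ and $c \leq d-1$ to need careful use: in fact the sharper observation is that $\min(a,d) > \max(b,c)$ means $b,c \leq \min(a,d)-1$, and I would exploit the resulting bound $bc \leq (\min(a,d)-1)^2$ to close the argument.

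**The cleanest route** is probably to argue by contradiction: suppose $a > n$ (WLOG $a \geq d$ by symmetry, so $a = \max(a,d)$). Then since $b,c < d \leq a$, I have $bc \leq (d-1)^2 \leq (a-1)(d-1) < ad - \max(a,d) + 1$, and I would manipulate this to contradict $ad - bc = n$ combined with $a > n$. Concretely, $n = ad - bc \geq ad - (d-1)^2$, and if additionally $a > n$ one derives a numerical contradiction once $d \geq 1$ is used (note $d \geq 1$ since $\det = n > 0$ forces $d \geq 1$). Once $a, d \leq n$ is established, the off-diagonal bounds $b, c < \min(a,d) \leq n$ are immediate, giving $b, c \in \{0,1,\ldots,n\}$ as well.

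**The main obstacle I anticipate** is the boundary/edge behaviour: I must verify that the strict inequalities are compatible at the extremes, in particular ruling out the degenerate possibility that one diagonal entry is forced to equal exactly $d=1$ or that $b=c=0$ makes the bound vacuous (in which case $ad = n$ and the bound $a \leq n$ follows trivially from $d \geq 1$). I would therefore split into the case $bc = 0$ (where $ad = n$ directly bounds $a,d \leq n$) and the case $bc \geq 1$ (which forces $\min(a,d) \geq 2$ and allows the contradiction argument above to run with room to spare). Assembling these two cases yields $a,b,c,d \in \{0,1,\ldots,n\}$, which is exactly the claim of Proposition \ref{propeasybound}.
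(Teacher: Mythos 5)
Your proof is correct and takes essentially the same route as the paper: both arguments use Euclid-reducedness to bound the off-diagonal product by $\left(\min(a,d)-1\right)^2$ and then deduce from the determinant equation $ad-bc=n$ that the largest (diagonal) entry is at most $n$, after which the remaining entries are bounded trivially. The only difference is presentational: the paper verifies the key inequality $ad-(d-1)^2\geq a$ by restricting the concave function $x\mapsto ax-(x-1)^2$ to the endpoints of $[1,a]$ inside an extremal-entry framing, whereas you get it by direct factoring, which is if anything cleaner.
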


\begin{cor}\label{corboundedentries} There are at most
  $(n+1)^4$ matrices in the set $\mathcal R(n)$ of Euclid-reduced matrices
  of determinant $n$.
\end{cor}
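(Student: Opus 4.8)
The plan hinges on Proposition \ref{propeasybound}: once every entry is known to lie in $\{0,1,\ldots,n\}$, the corollary follows immediately by a counting argument. So let me first sketch how I would prove Proposition \ref{propeasybound}, and then read off the corollary.

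Let $M=\left(\begin{array}{cc}a&b\\c&d\end{array}\right)$ be a matrix in $\mathcal R(n)$, so that $ad-bc=n$ and the Euclid-reducedness condition $\min(a,d)>\max(b,c)$ holds. The plan is to bound the two \emph{diagonal} entries $a$ and $d$; the off-diagonal entries $b,c$ are then automatically controlled since $\max(b,c)<\min(a,d)$. First I would observe that $a,d\geq 1$, because $\min(a,d)>\max(b,c)\geq 0$ forces both diagonal entries to be strictly positive. Next, since $b,c\geq 0$ we have $bc\geq 0$, and therefore $ad=n+bc\geq n$; this gives a lower bound but we want an \emph{upper} bound on each diagonal entry. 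The key inequality comes from combining $ad=n+bc$ with the reducedness constraints $b<d$ and $c<a$ (reading off $\max(b,c)<\min(a,d)$). From $c<a$ and $b<d$ we get $bc<ad$, hence $bc\leq ad-1$, but more usefully $bc<ad=n+bc$ is consistent; the sharper move is to note $ad=n+bc$ together with $c\leq a-1$ and $b\leq d-1$, so $bc\leq(a-1)(d-1)=ad-a-d+1$, which yields $n=ad-bc\geq a+d-1$. This is the crucial estimate: it gives $a+d\leq n+1$, and since $a,d\geq 1$ we conclude $a\leq n$ and $d\leq n$ at once.

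With $a,d\leq n$ established and $a,d\geq 1$, the reducedness condition $\max(b,c)<\min(a,d)\leq n$ immediately gives $0\leq b,c\leq n-1<n$. Hence every one of the four entries $a,b,c,d$ lies in $\{0,1,\ldots,n\}$, which is exactly the claim of Proposition \ref{propeasybound}.

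For Corollary \ref{corboundedentries} I would argue as follows. By Proposition \ref{propeasybound} every matrix in $\mathcal R(n)$ is determined by its four entries, each of which ranges over the $(n+1)$-element set $\{0,1,\ldots,n\}$. The assignment sending a matrix to its ordered quadruple $(a,b,c,d)$ is injective into $\{0,1,\ldots,n\}^4$, a set of cardinality $(n+1)^4$. Therefore $\sharp(\mathcal R(n))\leq(n+1)^4$, as asserted. The only delicate point in the whole argument is securing the inequality $n\geq a+d-1$ cleanly from the reducedness constraints; once that estimate is in hand, both statements are routine. I expect no genuine obstacle beyond bookkeeping, since the corollary is a direct counting consequence of the proposition and requires no further lattice-theoretic input.
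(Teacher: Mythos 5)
Your proof is correct. For the corollary proper you argue exactly what the paper intends when it writes that the proof is left to the reader: Proposition \ref{propeasybound} puts all four entries in $\{0,1,\ldots,n\}$, a matrix is determined by its ordered quadruple of entries, so $\mathcal R(n)$ injects into $\{0,1,\ldots,n\}^4$ and $\sharp(\mathcal R(n))\leq (n+1)^4$. Where you genuinely depart from the paper is in your proof of Proposition \ref{propeasybound} itself (which, strictly speaking, you could simply have cited, since it is stated before the corollary). The paper proves the proposition by an extremal argument: it picks a solution $n=ab-cd$, $\min(a,b)>\max(c,d)$, whose largest entry $a=\max(a,b)$ is maximal among all entries occurring in $\mathcal R(1),\ldots,\mathcal R(n)$, replaces $c,d$ by the extreme admissible value $b-1$, and then uses concavity of $x\mapsto ax-(x-1)^2$ on $[1,a]$ to reduce to the endpoint cases $x=1$ and $x=a$, giving $n\geq a$ or $n\geq 2a-1$ and hence $a\leq n$. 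Your argument is direct and pointwise: in your notation, $0\leq c\leq a-1$ and $0\leq b\leq d-1$ give $bc\leq (a-1)(d-1)$, hence $n=ad-bc\geq a+d-1$, so $a+d\leq n+1$ and, with $a,d\geq 1$, both diagonal entries are at most $n$; the off-diagonal entries are then $<n$ by reducedness. This is shorter, avoids the ``maximal entry over all determinants up to $n$'' bookkeeping entirely, and even yields the slightly sharper conclusion $a+d\leq n+1$. Both routes deliver the same corollary; yours is arguably the cleaner path to the proposition.
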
  

We leave the obvious proof of the Corollary to the reader.

\begin{proof}[Proof of Proposition \ref{propeasybound}]
  Let $n=ab-cd$ with $\min(a,b)>\max(c,d)$ be a solution
  corresponding to the Euclid-reduced matrix
  $\left(\begin{array}{cc} a&c\\d&b\end{array}\right)$
  with $\max(a,b)$ maximal among entries occuring in elements of
  $\mathcal R(1),\ldots,\mathcal R(n)$.
  Up to exchanging $a$ and $b$
  we can suppose that $a\geq b$.
  Since
  $n=ab-cd\geq ab-(b-1)^2>0$ we can assume $c=d=b-1$.
  Restricting $ax-(x-1)^2$ to $x$ in $[1,\ldots,a]$ we can furthermore
  assume either $x=1$ or $x=a$. In the first case we get $n\geq a\cdot 1-0^2=a$
  and in the second case we get $n\geq a^2-(a-1)^2=2a-1$ showing
  the inequality $\max(a,b)=a\leq n$ in both cases.
\end{proof}

\subsection{Finiteness for size larger than two}

The obvious definition of Euclid-reducedness leads to infinite sets
of Euclid-reduced matrices when considering matrices of larger size:

    The matrix $\left(\begin{array}{ccc}4+x&2+x&1+x\\x&1+x&3+x\\1+x&1+x&2+x\end{array}\right)$ has determinant $1$ and is 'Euclid-reduced' for any natural integer $x$.

    An example of size $2\times 3$ with columns generating $\mathbb Z^2$ is given by
    is given by $\left(
      \begin{array}{ccc}n&3&2\\1&2&3\end{array}\right)$ for $n\geq 5$ such that
    $n\not\equiv 4\pmod 5$.

\subsection{Finiteness for determinant zero}
All square matrices of size two with (at least) three zero entries and
an arbitrary entry in $\mathbb N$ are Euclid-reduced and every
Euclid-reduced matrix with determinant $0$ and entries in $\mathbb N$ is
of this form: If a matrix $M$ (of square size two with entries in $\mathbb N$
has determinant $0$ then its rows (or columns) are linearly dependent.
Subtracting the smaller row iteratedly from the larger one we end up with
a matrix having a zero-row. Working with columns we get finally a matrix
having a unique non-zero entry.

Requiring the entries of such a matrix to have a given non-zero greatest
divisor ensures uniqueness up to the location of the non-zero entry.
There are therefore exactly four Euclid-reduced matrices (of square size $2$)
with determinant $0$ and and greatest common divisor of entries a given non-zero
integer $d\geq 1$.

\subsection{Gau\ss ian integers}

We discuss briefly an analogue of $\mathcal R(n)$ over the ring
of Gau\ss ian integers (the case of integers in an imaginary quadratic 
number field is probably similar).

Given a non-zero Gau\ss ian integer $z$, we define the set $\mathcal S(z)$
containing all solutions
of $ab+cd=z$ satisfying $\min(\vert a\vert,\vert b\vert)>\max(\vert c\vert,\vert d\vert)$ with $a,b,c,d$ in the set $\mathbb Z[i]$ of Gau\ss ian integers.

The two identities
$$2m+1=(2n+(2n^2-m-1)i)(2n-(2n^2-m-1)i)-(2n^2-m)^2$$
and
$$2m=(2n+1+(2n^2+2n-m)i)(2n+1-(2n^2+2n-m)i)-(2n^2+2n-m+1)^2$$
show that
the sets $\mathcal S(z)$ are always infinite for $z\in\mathbb Z\setminus\{0\}$.

More generally, $\mathcal S(z)$ is infinite for every Gau\ss ian integer
$z$ of the form $z=nu^2$ for $n$ in $\mathbb N\setminus\{0\}$ a sum
of two squares (i.e. containing no odd power of a prime congruent to $3$ modulo
$4$ in its prime-factorization) and for $u\in
\mathbb Z[i]\setminus\{0\}$ an arbitrary non-zero Gau\ss ian integer.

Solutions can be fairly large as shown by the identity
$$2+3i=-(7-18)^2+(3+19i)(-15+12i)$$
contributing to $\mathcal S(2+3i)$ which has seemingly only finitely many
elements.

There are obvious bijections between $\mathcal S(z),\mathcal S(\overline{z}),
\mathcal S(-z),\mathcal S(\pm iz)$. Moreover, $\mathcal S(z)$ infinite
implies $\mathcal S(s\overline{s}t^2z)$ infinite for every 
non-zero Gau\ss ian integers $s,t$ in $\mathbb Z[i]\setminus\{0\}$.


\noindent Roland BACHER, 

\noindent Univ. Grenoble Alpes, Institut Fourier, 

\noindent F-38000 Grenoble, France.
\vskip0.5cm
\noindent e-mail: Roland.Bacher@univ-grenoble-alpes.fr


\begin{thebibliography}{99}


\bibitem{Ar} Arnold, V. I. (1998),  
Higher dimensional continued fractions.
\textit{Regul. Chaotic Dyn.} 3(3): 10--17. 
  
\bibitem{Baquix} Bacher, R., A Quixotic Proof of Fermat's Two Squares Theorem for Prime Numbers, submitted.




\bibitem{Gr} Gruber, B. (1997), Alternative formulae for the number of sublattices.
\textit{Acta Cryst.} A53, 807--808.

\bibitem{OEIS}The On-Line Encyclopedia of Integer Sequences, http://oeis.org

\bibitem{Zou} Y.M. Zou, Y.M. (2006),
Gaussian binomials and the number of sublattices,
\textit{Acta Cryst.} A62, 409--410.
\end{thebibliography}
\end{document}